\theoremstyle{theorem}
\newtheorem{theorem}{Theorem}
\newtheorem{proposition}{Proposition}
\theoremstyle{definition}
\newtheorem*{remark}{Remark}
\newtheorem{question}{Question}
\newtheorem{example}{Example}
\date{}
\author{Ehssan Khanmohammadi}
\email{ehssan@pm.me}
\author{Omid Khanmohamadi}
\email{omidmath@pm.me}
\title[]{From Uniform Boundedness to the Boundary Between Convergence and Divergence}
\begin{document}
\maketitle
\section{Introduction}
Three of the fundamental ideas Stefan Banach introduced in functional analysis
together lead to his discovery of three fundamental results \cite[\S VI.84]{Kaluza1996,PCM2008}. The ideas were
\emph{abstract points} (functions as points, leading to operators and function
spaces), \emph{abstract sizes} (norms of functions, leading to distances
between functions), and \emph{abstract limits} (limits of sequences of
functions, leading to completeness of function spaces). The results were the
\emph{uniform boundedness principle}, the \emph{open mapping theorem}, and the
\emph{closed graph theorem}, which are all interrelated, in the sense that in
complete normed vector spaces (known as Banach spaces), Baire's category
theorem leads to several equivalences between qualitative properties (e.g.,
finiteness, surjectivity, regularity) and quantitative properties (e.g.,
estimates) of continuous (or equivalently bounded) linear operators \cite[\S 1.7]{Tao2010}.
One of these equivalences is captured by the uniform boundedness principle,
also known as the Banach--Steinhaus theorem. In this article we introduce a dual
of the uniform boundedness principle which does \emph{not} require \emph{completeness} and gives an
indirect means for testing the boundedness of a set. The dual principle, although known to the analyst and despite its applications in establishing results such as Hellinger--Toeplitz theorem, is often
missing from elementary treatments of functional analysis. In Example~\ref{Ex:counterexample} we indicate a connection between the dual principle and a question in spirit of du Bois-Reymond regarding the boundary between convergence and divergence of sequences. This example is intended to illustrate why the statement of the principle is natural and clarify what the principle claims and what it does not.
\section{Unbounded sets in normed spaces}
We begin with a proposition of linear algebraic flavor about the relation
between unbounded subsets of a normed space and the linear functionals on that
space. Below we shall assume that all vector spaces are over the field $\mathbb{R}$, and all linear maps between them are real-linear, although our results carry over easily to the field of complex numbers.

\begin{proposition}\label{P:Unbounded-Arbitrary Functionals} Let $S$ be an
  unbounded subset of a normed vector space $X$. Then there exists a linear
  functional $\phi\colon X\to \mathbb{R}$ whose restriction to $S$ has an
  unbounded image in $\mathbb{R}$. 
\end{proposition}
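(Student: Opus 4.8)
The plan is to argue by a dichotomy on the dimension of $Y:=\operatorname{span}(S)$, constructing $\phi$ in each case by prescribing its values on a Hamel basis of $X$ and extending by linearity. The only real input beyond linear algebra is that a linearly independent set can be completed to a Hamel basis (Zorn's lemma), which is exactly what lets us build a linear functional with prescribed values; since the proposition asks for no continuity, there is nothing else to check.

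First I would treat the case $\dim Y=\infty$. Here I extract from $S$ a sequence $(x_n)_{n\ge 1}$ of \emph{linearly independent} vectors: pick any nonzero $x_1\in S$, and, given linearly independent $x_1,\dots,x_k\in S$, note that $S\not\subseteq\operatorname{span}(x_1,\dots,x_k)$ — otherwise $Y$ would be finite-dimensional — so some $x_{k+1}\in S$ lies outside that span, and $x_1,\dots,x_{k+1}$ remain independent. Completing $\{x_n:n\ge 1\}$ to a Hamel basis $B$ of $X$, I define $\phi$ by $\phi(x_n)=n$ for every $n$ and $\phi(b)=0$ for $b\in B\setminus\{x_n:n\ge1\}$, and extend linearly. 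Then $\phi(S)\supseteq\{1,2,3,\dots\}$, which is unbounded. Interestingly, unboundedness of $S$ plays no role in this case: any set spanning an infinite-dimensional subspace already admits a (necessarily discontinuous) functional unbounded on it.

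Next I would treat the case $\dim Y=d<\infty$. Fix a basis $e_1,\dots,e_d$ of $Y$ with coordinate functionals $c_1,\dots,c_d\colon Y\to\mathbb R$, so that $y=\sum_{i=1}^{d}c_i(y)e_i$ for all $y\in Y$. The triangle inequality gives $\|y\|\le\bigl(\sum_{i=1}^{d}\|e_i\|\bigr)\max_i|c_i(y)|$ for $y\in Y$, so the unboundedness of $S$ forces $\sup_{s\in S}\max_i|c_i(s)|=\infty$; since there are only finitely many indices, some single $c_i$ is unbounded on $S$ by pigeonhole. Finally I complete $e_1,\dots,e_d$ to a Hamel basis of $X$ and let $\phi$ agree with $c_i$ on $Y$ and vanish on the remaining basis vectors, so that $\phi|_S=c_i|_S$ has unbounded image. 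Together the two cases prove the statement.

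I do not expect a genuine obstacle here: the finite-dimensional estimate is immediate from the triangle inequality, and the greedy extraction of a linearly independent sequence is just the standard fact that an infinite-dimensional space is not spanned by finitely many vectors. The one point requiring comfort rather than cleverness is the passage through a Hamel basis to manufacture a linear functional with prescribed values — this is where the axiom of choice enters and where the produced $\phi$ is typically unbounded (discontinuous), which is precisely why the conclusion cannot be strengthened to a \emph{continuous} functional.
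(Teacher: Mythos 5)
Your proof is correct, and its overall architecture matches the paper's: a dichotomy on whether $\operatorname{Span} S$ is finite- or infinite-dimensional, with the infinite-dimensional case handled identically (greedy extraction of a linearly independent sequence from $S$, completion to a Hamel basis, and the assignment $\phi(x_n)=n$, $\phi=0$ elsewhere). The one genuine difference is in the finite-dimensional case. The paper first invokes the theorem that all norms on a finite-dimensional space are equivalent, so as to replace the given norm by one induced by an inner product, and then uses orthogonal projections onto an orthonormal basis together with the Pythagorean bound $\|s\|\le\sqrt{\sum M_i^2}$. You instead work with an arbitrary algebraic basis $e_1,\dots,e_d$ of $\operatorname{Span} S$ and its coordinate functionals, and you only need the cheap direction $\|y\|\le\bigl(\sum_i\|e_i\|\bigr)\max_i|c_i(y)|$, which is a one-line triangle-inequality estimate; boundedness of all coordinates then forces boundedness of $S$, so some coordinate functional is unbounded on $S$, and you extend it to $X$ via a Hamel basis. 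This buys you a more elementary and self-contained argument (no appeal to norm equivalence or inner products), at the trivial cost that your extended functional need not be continuous even in the finite-dimensional case, which the statement does not require anyway; the paper's choice of orthogonal projections buys continuity there, but that extra feature is not used.
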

We include a simple proof here for the sake of completeness.
\begin{proof}
First assume $X$ is finite dimensional and let $n=\dim X$. Then since all norms on any finite-dimensional normed space are equivalent, we may (and we will) assume that the norm of $X$ is induced by an inner product. Fix an orthonormal basis $\{e_1, \dots, e_n\}$ with respect to this inner product for $X$  and for $i=1, \dots, n$ let $\operatorname{Proj}_{e_i}\colon X\to \mathbb{R}$ denote the scalar projection onto the $i$-th coordinate:
	\[
	\operatorname{Proj}_{e_i}(c_1e_1+\dots+c_ne_n)=c_i \quad \text{ for any } c_1, \dots, c_n\in \mathbb{R}.
	\]
We claim that for some value of $i$, the restriction of $\operatorname{Proj}_{e_i}$ to $S$ has an unbounded image. Indeed, if we had
  \[
    \sup_{x\in \operatorname{Proj}_{e_i} S}|x|\le M_i 
    \quad \text{with $M_i\ge 0$ for all } i=1, \dots, n,
  \]
  then it would follow that $\sup_{s\in S}\|s\|\le \sqrt{\sum M_i^2}<\infty $ contrary to
  the unboundedness of $S$.
  
  The same argument proves the proposition if $S$ (or an unbounded subset of
  $S$) is contained in a finite-dimensional subspace of $X$, or equivalently,
  if $\dim \operatorname{Span} S<\infty$.

  So we will assume that $S$ is not contained in any finite-dimensional subspace
  of $X$; then proceed to find an infinite linearly independent subset $\{b_1, b_2, \dots\}$
  of $S$ and extend $\{b_1, b_2, \dots\}$ to a possibly uncountable Hamel basis
  $B$ of $X$. Now define a function $\phi$ on $B$ by
  \[
    \phi(b)=
    \begin{cases}
      k & \text{ if } b=b_k,\\
      0 & \text{ otherwise},
    \end{cases}
    \qquad
    \text{ for }
    b\in B
  \]
and extend $\phi$ linearly to a functional, also denoted by $\phi$, on the entire space $X$. By construction, the restriction of $\phi$ to $\{b_1, b_2, \dots\}\subset S$ has an unbounded image and this completes the proof.
\end{proof}
The restriction of the functional $\phi$ to $S$ in the above proof had an unbounded image
as we required. However, the functional itself might also be ``unbounded'' or
discontinuous, an unwelcome phenomenon in analysis. Therefore, we can ask
whether unboundedness of $S$ can be captured by a \emph{continuous} linear
functional. Although it may not be a priori clear, this question is closely
related to the famous uniform boundedness principle in analysis. We explore
this relation in the next two sections. As we shall see, a central role in this regard is played by the notion of the \emph{operator norm}, denoted $\|T\|_{\text{op}}$, of a linear map of normed spaces $T\colon X\to Y$. We say that $T$ is \emph{bounded} when the operator norm defined by
	\[
	\|T\|_{\text{op}}=\sup_{\|x\|_X\le 1} \|Tx\|_Y
	\]
is finite. In words, $T$ is said to be bounded (as a function) if the image of the unit ball under $T$ is bounded (as a set). A simple observation that shows the importance of this definition in analysis is that boundedness and continuity are equivalent properties for linear maps of normed spaces.
\section{Uniform boundedness principle}
Before introducing its dual, let us first give a ``quantitative'' version of the uniform boundedness
principle itself. 
\begin{theorem}[Uniform boundedness principle]
  \label{thm:uniform-boundedness-principle} Let $X$ be a Banach space
  and let $Y$ be a normed space. Consider a family $F$ of bounded
  linear operators $T\colon X\to Y$. If $F$ is pointwise
  bounded, then it is uniformly bounded. 
  
  In fact, if $F$ is not uniformly bounded, then there exists a point $x\in X$ and a sequence $(T_n)$ of operators in $F$ satisfying $\|T_{n+1}\|_{\text{op}}> \|T_n\|_{\text{op}}$, $\|T_{n+1} x\|_Y> \|T_n x\|_Y$ for all $n$, and $\|T_n x\|_{\text{op}}\to \infty$.
\end{theorem}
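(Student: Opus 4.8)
The plan is to prove the contrapositive: assuming $F$ is not uniformly bounded, I will construct the point $x$ and the sequence $(T_n)$ simultaneously by a gliding-hump argument combined with the Baire category theorem. The two conclusions (uniform boundedness from pointwise boundedness, and the refined sequential statement) are really the same fact; the refinement just tracks the construction more carefully.

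First I would record the standard fact that if $F$ is pointwise bounded on a Banach space, then it is uniformly bounded: for each $m$ the set $E_m=\{x\in X : \|Tx\|_Y\le m \text{ for all } T\in F\}$ is closed (intersection of closed sets, using boundedness of each $T$) and $\bigcup_m E_m = X$ by pointwise boundedness, so by Baire some $E_m$ has nonempty interior, containing a ball $B(x_0,r)$; then for $\|z\|\le 1$ and any $T\in F$ one has $\|Tz\|_Y = r^{-1}\|T(x_0+rz) - Tx_0\|_Y \le r^{-1}(m + \|Tx_0\|_Y)$, and since $\|Tx_0\|_Y$ is itself bounded over $F$ by pointwise boundedness, $\sup_{T\in F}\|T\|_{\text{op}}<\infty$. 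Contrapositively, if $F$ is not uniformly bounded, then $F$ is not pointwise bounded: there exists $x\in X$ with $\sup_{T\in F}\|Tx\|_Y=\infty$.

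With such an $x$ fixed, I would then extract the sequence. Since $\sup_{T\in F}\|Tx\|_Y=\infty$, choose $T_1\in F$ arbitrarily, and having chosen $T_n$, pick $T_{n+1}\in F$ with $\|T_{n+1}x\|_Y > \max\{\|T_n x\|_Y,\ n\}$; this already forces $\|T_n x\|_Y\to\infty$, and gives the strict inequality $\|T_{n+1}x\|_Y>\|T_n x\|_Y$. The strict inequality $\|T_{n+1}\|_{\text{op}}>\|T_n\|_{\text{op}}$ comes for free because $\|T_n\|_{\text{op}}\ge \|T_n x\|_Y/\|x\|_Y$ (assuming $x\ne 0$, which holds since $\|T_1 x\|_Y>0$ is forced once we insist $\|T_1 x\|_Y>0$), so $\|T_n\|_{\text{op}}\to\infty$ as well; but to get the operator norms \emph{strictly} increasing I may need to pass to a subsequence — having $\|T_n x\|_Y\to\infty$ lets me discard terms so that the subsequence has $\|T_{n+1}\|_{\text{op}} > \|T_n\|_{\text{op}}$ while preserving $\|T_{n+1}x\|_Y>\|T_n x\|_Y$, since both sequences tend to infinity. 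Finally I would note that the displayed conclusion writes ``$\|T_n x\|_{\text{op}}\to\infty$'' which I read as $\|T_n\|_{\text{op}}\to\infty$, already established.

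The main obstacle is the interplay between the two monotonicity requirements and the single choice of $x$: one must ensure that the \emph{same} $x$ witnesses divergence of $\|T_n x\|_Y$ while the operator norms also increase strictly. This is handled by first pinning down $x$ via the failure of pointwise boundedness, then doing a two-stage selection — first forcing $\|T_n x\|_Y$ strictly increasing to infinity, then thinning to also force $\|T_n\|_{\text{op}}$ strictly increasing, which is possible precisely because $\|T_n\|_{\text{op}}\ge\|T_n x\|_Y/\|x\|_Y\to\infty$. A minor point to get right is $x\ne 0$, which I arrange by choosing $T_1$ so that $\|T_1x\|_Y>0$ (possible since $\sup_{T\in F}\|Tx\|_Y=\infty>0$).
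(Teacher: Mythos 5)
Your proof is correct, but it follows a genuinely different route from the paper's. You prove the qualitative statement first via the classical Baire category argument (the sets $E_m=\{x:\|Tx\|_Y\le m \text{ for all } T\in F\}$), then obtain the refined conclusion by contraposition: failure of uniform boundedness yields a point $x$ with $\sup_{T\in F}\|Tx\|_Y=\infty$, from which you extract $(T_n)$ with $\|T_n x\|_Y$ strictly increasing to infinity, and then thin the sequence using $\|T_n\|_{\text{op}}\ge \|T_n x\|_Y/\|x\|_X\to\infty$ to make the operator norms strictly increasing as well (this two-stage extraction is sound: any subsequence of a strictly increasing sequence stays strictly increasing, and divergence of $\|T_n\|_{\text{op}}$ lets you select the indices). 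The paper instead avoids Baire's theorem entirely: it picks operators with $\|T_{n+1}\|_{\text{op}}\ge 4^{2n+1}\|T_n\|_{\text{op}}$, near-norming unit vectors $x_n$, and \emph{constructs} the witness point explicitly as a gliding-hump sum $x=\sum_k \sigma(k)4^{-k}x_k$ with recursively chosen signs, obtaining the explicit lower bound $\|T_nx\|_Y\ge\frac16 4^{-n}\|T_n\|_{\text{op}}$; completeness enters only through convergence of the absolutely convergent series. What each approach buys: your argument shows the ``quantitative'' second half is formally a corollary of the ordinary uniform boundedness principle plus an easy subsequence extraction, so any proof of the classical statement suffices; the paper's point (stated in its remark) is precisely to have an elementary, constructive proof that produces $x$ and the estimates directly rather than by contradiction through category. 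One small reading issue: you interpret the displayed ``$\|T_nx\|_{\text{op}}\to\infty$'' as $\|T_n\|_{\text{op}}\to\infty$, whereas the paper's proof actually establishes $\|T_nx\|_Y\to\infty$ (the subscript is evidently a typo); this is immaterial for you, since your construction yields both divergences along the same sequence, and the divergence of $\|T_nx\|_Y$ is the one needed later in the proof of the dual principle.
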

\begin{proof}
Suppose $F$ is not uniformly bounded. Then we can find a sequence $(T_n)$ of nonzero operators in $F$ such that $\|T_{n+1}\|_{\text{op}}\ge 4^{2n+1}\|T_n\|_{\text{op}}$. Choose unit vectors $x_n$ such that $\|T_n x_n\|_Y\ge \frac{1}{2}\|T_n\|_{\text{op}}$.

For any $a, b\in X$, by the triangle inequality, at least one of the two inequalities $\|a+b\|_X\ge \|b\|_X$ and $\|a-b\|_X\ge \|b\|_X$ must hold. Thus, we may define a vector $x$ by $x=\sum_{k=1}^\infty \sigma(k) 4^{-k}x_k$ where $\sigma(k)$ takes its values from $\{\pm 1\}$ and it is defined recursively so that
	\[
	\left\|\sum_{k=1}^n  \sigma(k)4^{-k}T_nx_k\right\|_Y\ge \|4^{-k}T_n x_n\|_Y\ge \frac{1}{2}4^{-n}\|T_n\|_{\text{op}}.
	\]
Note that the series defining $x$ is absolutely convergent and hence convergent by completeness of $X$. The triangle inequality then implies that
	\begin{align}\label{eq:norm-comparison}
	\|T_n x\|_Y&\ge \notag
	\left\|\sum_{k=1}^n  \sigma(k)4^{-k}T_nx_k\right\|_Y
	-
	\left\|\sum_{k=n+1}^\infty  \sigma(k)4^{-k}T_nx_k\right\|_Y\\ 
	&\ge
	\frac{1}{2}4^{-n}\|T_n\|_{\text{op}}-
	\frac{1}{3}4^{-n}\|T_n\|_{\text{op}}=
	\frac{1}{6}4^{-n}\|T_n\|_{\text{op}}
	\end{align}
Since $\|x\|_X\le \frac{1}{3}$, we have $\|T_n x\|_Y\le \frac{1}{3}\|T_n\|_{\text{op}}$ and hence \eqref{eq:norm-comparison} yields
	\begin{align*}
	\|T_{n+1} x\|_Y&\ge \frac{1}{6}4^{-n-1}\|T_{n+1}\|_{\text{op}}\\
	&\ge \frac{1}{6}4^{-n-1}4^{2n+1}\|T_n\|_{\text{op}}=\frac{1}{6}4^n\|T_n\|_{\text{op}}\\
	&> \frac{1}{3}\|T_n\|_{\text{op}}\ge \|T_n x\|_Y
	\end{align*}
as desired.
\end{proof}
\begin{remark}
  Over the years, there have been numerous proofs of the uniform boundedness
  principle. These proofs may be
  categorized into those which use Baire's category theorem
  (``non-elementary'' proofs) and those which don't (``elementary'' proofs).
  Out of the ``elementary'' proofs the ``simple'' ones are of special
  interest; they usually make use of a ``gliding hump argument,'' such as the
  ones given by \cite[p.~51]{Carothers2005} or \cite{Sokal2011}.
  \cite{Carothers2005} reports that the original proof of the principle by
  Steinhaus and his protege Banach must have been an elementary proof of this
  kind, but apparently it was lost during the war. The non-elementary proof
  that survived was suggested as an alternative proof by Saks who refereed
  their paper! Some other elementary proofs (such as the one given by
  \cite[p.~63]{RieszNagy1990}) make use of a ``nested ball'' argument,
  similar to the argument used in the proof of Baire's category theorem. The advantage of the proof given above is its ``constructive'' nature (as opposed to most other proofs that are proofs by contradiction) which allows us to give a quantitative version of the uniform boundedness principle that we shall use in proving Theorem~\ref{thm:uniform-boundedness-dual}.
\end{remark}

\begin{subsection}{Norms in codomain and its dual}
Because the dual involves the codomain $Y$, let us say a few words about $Y$
in the uniform boundedness principle, which is merely a normed space. One of
the easy consequences of the Hahn--Banach theorem is the duality between the
definitions of the norm in $Y$ and in its dual $Y^*$ consisting of \emph{bounded} linear maps $y^*\colon Y\to \mathbb{R}$. More
precisely, for any $y^*\in Y^*$,
\[
\|y^*\|_{\text{op}}=\sup_{\|y\|\le 1}|y^*(y)|,
\]
and for any $y\in Y$,
\[
\|y\|=\sup_{\|y^*\|_{\text{op}}\le 1}|y^*(y)|,
\]
where in the second equality the supremum is attained.
\end{subsection}

\begin{remark} The most basic examples of normed spaces are, of course, the scalar
fields $\mathbb{R}$ and $\mathbb{C}$. It is perhaps interesting to
note that the uniform boundedness
principle for linear functionals (i.e., in the special case that the codomain is a scalar field) implies
the same theorem for all linear operators using the above remark about the
computation of norms. To see this, let $F$ be a family of bounded operators $T\colon X\to Y$ between normed spaces with $X$ complete. Suppose $F$ is pointwise bounded so that $\|T x\|\le M_x$ for some $M_x\ge 0$ depending on each $x$ in the unit ball of $X$. Then for each $y^*$ in the unit ball of $Y^*$, the functional $y^*\circ T$ is bounded and $|(y^*\circ T) (x)|\le M_x$. Thus by the uniform boundedness principle for functionals applied to the family $\{y^*\circ T\mid T\in F, y^*\in Y^* \text{ with } \|y^*\|_{\text{op}}\le 1\}$ we conclude that $|(y^*\circ T) (x)|\le M$ for some $M\ge 0$ independent of $x$. Taking the supremum over $y^*$, we obtain $\|T x\|\le M$, as claimed.
\end{remark}
\section{A dual for the uniform boundedness principle}
The appearance of the Hahn--Banach theorem, which is applicable to
general (i.e., not necessarily complete) normed spaces, in the last
section is not completely accidental. It suggests the idea that the
uniform boundedness principle might have some applications in the
context of general normed spaces as well. Our Hahn--Banach argument
proves, in particular, the following theorem, which is where Hahn
(1879--1934), Banach (1892--1945), and Steinhaus (1887--1972) meet,
posthumously! 
\begin{theorem}[Dual for the uniform boundedness principle]
  \label{thm:uniform-boundedness-dual} Let $S$ be a subset of a normed space $X$. If $\phi (S)$ is bounded for each $\phi\in X^*$,
  then $S$ is bounded.
  
  In fact, if $S$ is unbounded, then there exists $\phi\in X^*$ and a sequence $(s_n)$ in $S$ satisfying $\|s_{n+1}\|_X> \|s_n\|_X$, $|\phi(s_{n+1})|> |\phi(s_n)|$ for all $n$, and $|\phi(s_n)|\to \infty$.
\end{theorem}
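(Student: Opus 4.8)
The plan is to deduce both assertions from the uniform boundedness principle (Theorem~\ref{thm:uniform-boundedness-principle}), applied not on $X$ itself --- which need not be complete --- but on its dual space $X^*$, which \emph{is} complete regardless of $X$. The bridge is the canonical evaluation map: to each $s\in S$ I associate the functional $\hat{s}\colon X^*\to\mathbb{R}$ defined by $\hat{s}(\phi)=\phi(s)$. The point of the proof is that the completeness hypothesis of the uniform boundedness principle, which fails for $X$, is automatically satisfied once we transfer the problem to $X^*$.

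First I would record the two properties of $\hat{s}$ that make the argument go. Since $|\hat{s}(\phi)|=|\phi(s)|\le\|\phi\|_{\text{op}}\|s\|_X$, each $\hat{s}$ is a bounded linear operator from the Banach space $X^*$ into $\mathbb{R}$ with $\|\hat{s}\|_{\text{op}}\le\|s\|_X$; and by the Hahn--Banach duality formula $\|s\|_X=\sup_{\|\phi\|_{\text{op}}\le 1}|\phi(s)|$ recorded above (with the supremum in fact attained) we have the reverse inequality, so $\|\hat{s}\|_{\text{op}}=\|s\|_X$. Consider now the family $F=\{\hat{s}\mid s\in S\}$ of bounded operators on $X^*$. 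The hypothesis that $\phi(S)$ is bounded for every $\phi\in X^*$ says precisely that $F$ is pointwise bounded, so Theorem~\ref{thm:uniform-boundedness-principle} gives $\sup_{s\in S}\|s\|_X=\sup_{s\in S}\|\hat{s}\|_{\text{op}}<\infty$, that is, $S$ is bounded.

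For the quantitative statement, suppose $S$ is unbounded. Then $\sup_{s\in S}\|\hat{s}\|_{\text{op}}=\sup_{s\in S}\|s\|_X=\infty$, so $F$ is not uniformly bounded, and the second part of Theorem~\ref{thm:uniform-boundedness-principle}, with $X^*$ in the role of the Banach space and $\mathbb{R}$ in the role of the codomain, supplies a point $\phi\in X^*$ and a sequence $(\hat{s}_n)$ in $F$ with $\|\hat{s}_{n+1}\|_{\text{op}}>\|\hat{s}_n\|_{\text{op}}$, $|\hat{s}_{n+1}(\phi)|>|\hat{s}_n(\phi)|$ for all $n$, and $|\hat{s}_n(\phi)|\to\infty$. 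Unraveling the definitions, the corresponding $s_n\in S$ satisfy $\|s_{n+1}\|_X>\|s_n\|_X$, $|\phi(s_{n+1})|>|\phi(s_n)|$, and $|\phi(s_n)|\to\infty$, as required.

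The step I expect to carry the real weight is the identity $\|\hat{s}\|_{\text{op}}=\|s\|_X$: the inequality $\|\hat{s}\|_{\text{op}}\le\|s\|_X$ is immediate, but the reverse direction is exactly the nontrivial consequence of the Hahn--Banach theorem, and it is this --- rather than completeness of $X$ --- that does the work. Everything else is bookkeeping: once the family $F$ is seen to live on the complete space $X^*$, the hypotheses of the uniform boundedness principle are in place for free, and the two conclusions are simply the two conclusions of Theorem~\ref{thm:uniform-boundedness-principle} read back through the evaluation map.
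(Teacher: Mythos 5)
Your proposal is correct and follows essentially the same route as the paper: you transfer the problem to the complete space $X^*$ via the canonical evaluation (double-dual) map, use the Hahn--Banach isometry $\|\hat{s}\|_{\text{op}}=\|s\|_X$, and read both conclusions off the quantitative uniform boundedness principle. The only difference is expository --- you spell out the isometry and the unraveling of the quantitative part in more detail than the paper does.
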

Theorem~\ref{thm:uniform-boundedness-dual} can be thought of as a dual
for the uniform boundedness principle, since the boundedness of
$\phi(S)$ can be rephrased as the finiteness of
$\sup_{s\in S}|\phi(s)|$. Note that this theorem gives an
affirmative answer to the question that we raised after Proposition~\ref{P:Unbounded-Arbitrary Functionals}.
\begin{proof}
Since $\phi(S)$ is bounded, for $\phi\in X^*$,
  \[
  \sup_{s\in S}|s^{**}(\phi)|
  =
  \sup_{s\in S}|\phi(s)|
  < \infty.
  \]
This shows that the hypotheses of the uniform
  boundedness principle are satisfied for $F=\{s^{**}\colon X^*\to \mathbb{R}\mid s\in S\}$,
  thanks to the fact that the dual of any normed space is complete. Therefore, by the uniform
  boundedness principle and the fact that the map
  $x\mapsto x^{**}$ is an isometry from $X$ into the Banach space $X^{**}$,
  \[
  \sup_{s\in S}\|s\|_X
  =
  \sup_{s\in S}\|s^{**}\|_{X^{**}}
  <\infty,
  \]
  as desired. The last assertion follows from the second part of Theorem~\ref{thm:uniform-boundedness-principle}.
\end{proof}
Let us finish with a question about a possible strengthening of Theorem~\ref{thm:uniform-boundedness-dual} that we shall pick up in the next section.
\begin{question}\label{Q:beyond-subsequences} Suppose $S=\{s_1, s_2, \dots\}$ is a subset of a normed space $X$ such that $\|s_{n+1}\|_X>\|s_n\|_X$ for each $n$, and $\|s_n\|_X\to \infty$. Can we necessarily find a functional $\phi\in X^*$ satisfying $|\phi(s_{n+1})|>|\phi(s_n)|$ for each $n$, and $|\phi(s_n)|\to \infty$?
\end{question}
\section{Boundary between convergence and divergence}
We begin with a question---concerning the boundary between convergence and divergence of series---that first appeared in the work of Abel~\cite{Abel1828}, Dini\cite{Dini1867}, and du Bois-Reymond~\cite{Bois1873}.
\begin{question}\label{Q:Abel-du-Bois-Reymond} Suppose $\sum_{n=1}^\infty x_n$ is a convergent series with positive terms. Does there exist a sequence $(y_n)$ such that $y_n\to \infty$ and $\sum_{n=1}^\infty x_ny_n<\infty$? Similarly, suppose $\sum_{n=1}^\infty x_n$ is a divergent series with positive terms. Does there exist a sequence $(y_n)$ such that $y_n\to 0$ and $\sum_{n=1}^\infty x_ny_n=\infty$?
\end{question}
The answer to both of these, as it is well-known, is affirmative \cite{Knopp1928, Ash1997}. That is to say, there is neither a fastest convergent series nor a slowest divergent series. One can of course make analogous claims about sequences and, for instance, easily show that there is no slowest divergent sequence. Generalizing this, we pose a more restrictive question about the boundary between convergence and divergence of sequences.
\begin{question}\label{Q:main-question} Suppose $(x_n)$ is a sequence of numbers diverging to infinity. Does there exist a sequence $(y_n)$ such that $\sum_{n=1}^\infty y_n<\infty$ and $x_ny_n\to \infty$? What if we require $(y_n)\in \ell^p$?
\end{question}
Let us provide a quick comparison of the claims made in Questions~\ref{Q:Abel-du-Bois-Reymond} and \ref{Q:main-question}.
	\begin{center}\renewcommand{\arraystretch}{1.4}
	\begin{tabular}{|l|c|c|c|}
	\hline 
	 & Given & Wanted Convergence & Wanted Divergence\\ 
	\hline 
	Q\ref{Q:Abel-du-Bois-Reymond} & $\sum_{n=1}^\infty x_n=\infty$ & $y_n\to 0$& $\sum_{n=1}^\infty x_ny_n=\infty$ \\ 
	\hline 
	Q\ref{Q:main-question} & $x_n\to\infty$ & $\sum_{n=1}^\infty y_n<\infty$ & $x_ny_n\to \infty$ \\ 
	\hline 
	\end{tabular} 
	\end{center}
\begin{example}\label{Ex:counterexample} Let $x_n=\sqrt{n}$ for $n=1, 2, \dots$. Now we ask whether there exists a sequence $(y_n)\in \ell^2$ such that $x_ny_n\to \infty$ as $n\to \infty$. What makes the sequence $(x_n)$ worth studying in this context is the fact that $(\frac{1}{\sqrt{n^{1+\epsilon}}})\in \ell^2$ for all $\epsilon>0$, and $(\frac{1}{\sqrt{n^{1-\epsilon}}})\not\in \ell^2$ for all $\epsilon\ge 0$. Before answering this question, we indicate its connection with Theorem~\ref{thm:uniform-boundedness-dual}, the dual for the uniform boundedness principle. 

Let $(x_n)$ be a sequence of numbers such that $|x_{n+1}|> |x_n|$ for all $n$ and $|x_n|\to \infty$ and let $\{e_1, e_2, \dots\}$ be the standard orthonormal basis for $\ell^2$. Define a set $S$ by $S=\{x_1 e_1, x_2e_2, \dots\}$. Then $S$ is an unbounded subset of $\ell^2$ and hence, by Theorem~\ref{thm:uniform-boundedness-dual}, we can find $\phi\in (\ell^2)^*$ and a sequence $(s_n)$ in $S$ satisfying $\|s_{n+1}\|_2> \|s_n\|_2$, $|\phi(s_{n+1})|> |\phi(s_n)|$ for all $n$, and $|\phi(s_n)|\to \infty$. But since $|x_{n+1}|> |x_n|$, we must have $s_k=x_{n_k}e_{n_k}$ for a subsequence $(x_{n_k})$ of $(x_n)$. Thus, $|\phi(x_{n_{k+1}}e_{n_{k+1}})|> |\phi(x_{n_k}e_{n_k})|$ for all $n$, and $|\phi(x_{n_k}e_{n_k})|\to \infty$. To reveal the connection between the dual for the uniform boundedness principle and Question~\ref{Q:main-question}, we use the Riesz representation theorem which establishes the existence of a sequence $y=(y_n)\in \ell^2$ such that 
	\[
	\phi(x)=\langle x, y\rangle\qquad \text{ for all } x\in \ell^2.
	\]
Thus, for each $k$ we find $\phi(x_{n_k}e_{n_k})=\langle x_{n_k} e_{n_k}, y\rangle=x_{n_k}\langle  e_{n_k}, y\rangle=x_{n_k}y_{n_k}$. 

In conclusion, Theorem~\ref{thm:uniform-boundedness-dual} implies the existence of a square-summable sequence $(y_k)\in \ell^2$ such that $|x_{n_{k+1}}y_{k+1}|> |x_{n_k}y_k|$ and $|x_{n_k}y_k|\to \infty$ as $k\to \infty$ for a \emph{subsequence} $(x_{n_k})$ of $(x_n)$. But this statement is rather obvious! For instance, in the case of $(x_n)=(\sqrt{n})$ the \emph{subsequence} $(x_{n_k})$ defined by $x_{n_k}=\sqrt{k^4}=k^2$ and the sequence $(y_k)=(\frac{1}{k})$ have the desired properties.

This cannot be done, however, if we don't allow the passage to subsequences as in Questions~\ref{Q:beyond-subsequences} and \ref{Q:main-question}. To see this, we return to $(x_n)=(\sqrt{n})$ and let $(y_n)$ be any sequence such that $x_ny_n\to \infty$. Then $x_n^2y_n^2=ny_n^2\to\infty$. But since the harmonic series $\sum_{n=1}^\infty n^{-1}$ is divergent, and we are assuming that $y_n^2/n^{-1}\to \infty$, the limit comparison theorem for series implies that $\sum_{n=1}^\infty y_n^2$ must also be divergent, i.e., $(y_n)\not \in \ell^2$.
\end{example}
Questions of this nature arise in Fourier analysis and provide a means for measuring regularity of functions. For instance, for a periodic function $f\in L^2(\mathbb{T})$, we have $\widehat{f}\in \ell^2(\mathbb{Z})$, and if $f\in C^k(\mathbb{T}), k\ge 0$, then $\widehat{f}\in o(n^{-k})$, where $\widehat{f}(n)$ is the $n$th Fourier coefficient of $f$ given by $\widehat{f}(n)=\int_0^1 f(x)e^{-2\pi i nx}\, dx$ for each $n\in \mathbb{Z}$. See the Riesz--Fischer Theorem and the Riemann--Lebesgue Lemma~\cite{Tao2010}. Thus, ``the smoother the function, the faster the decrease of its Fourier coefficients.'' Indeed, a standard application of integration by parts shows that if $f\in C^k(\mathbb{T})$, then $(n^k\widehat{f}(n))\in \ell^2(\mathbb{Z})$. It would be nice if the converse were true, but it is false. It turns out that a weaker form of the converse is true, but we shall not state it here. Instead, we invite the interested reader to explore how this set of ideas leads to the definition of a Sobolev space.

\end{document}